\documentclass[12pt]{amsart}
\usepackage{amsfonts}
\usepackage{amssymb}
\usepackage[cmyk]{xcolor}

\footskip =1cm
\textwidth=15.2cm \textheight=22cm
\topmargin=-0.2cm
\oddsidemargin=0.9cm
\evensidemargin=0.6cm

\sloppy
\vfuzz10pt 
\hfuzz10pt 



\newcount\minleft
\newcount\timehour
\def\thetime{\timehour=\time
\divide\timehour by60 \minleft=\timehour \multiply\minleft by -60
\advance\minleft by\time \ifnum\time>720\advance\timehour
by-12\fi\relax
\number\timehour:\ifnum\minleft<10 %
    0\fi\relax\number\minleft
    \ifnum\time>720~pm \else~am\fi}

\newtheorem{theorem}{Theorem}[section]

\newtheorem{corollary}[theorem]{Corollary}
\newtheorem{proposition}[theorem]{Proposition}
\newtheorem{remark0}[theorem]{Remark}
\newtheorem{example0}[theorem]{Example}
\newtheorem{definition}[theorem]{Definition}

\newenvironment{example}{\begin{example0}\rm}{\end{example0}}
\newenvironment{remark}{\begin{remark0}\rm}{\end{remark0}}

\newcommand{\propref}[1]{Proposition~\ref{#1}}
\newcommand{\thmref}[1]{Theorem~\ref{#1}}

\newcommand{\corref}[1]{Corollary~\ref{#1}}
\newcommand{\exref}[1]{Example~\ref{#1}}
\newcommand{\secref}[1]{Section~\ref{#1}}

\def\max{{\mathfrak{m}}}                   

\def\res{{\mathbf{k}}}

\def\supp{{\rm{Supp}}}

\def\HF{{\operatorname{H\!F}}}
\def\HP{\operatorname{H\!P}}

\def\deg{\operatorname{deg}}

\def\dim{\operatorname{dim}}
\def\ann{\operatorname{Ann}}

\def\ker{\operatorname{Ker}}
\def\length{\operatorname{Length}}

\def\spec{\operatorname{Spec}}

\def\inf{\operatorname{Inf}}

\def\OX{{\mathcal O}_{X}}
\def\OXS{{\mathcal O}_{\widetilde{X}}}
\def\OXP{{\mathcal O}_{X'}}

\def\OX0{{\mathcal O}_{X}}

\def\OCn0{{\mathcal O}_{(\res^n,0)}}
\def\Cn0{({\res}^n,0)}

\begin{document}
\title[How to determine a curve singularity]{{\bf
How to determine a curve singularity}}
\author[J. Elias]{J. Elias ${}^{*}$}
\thanks{${}^{*}$
Partially supported by PID2019-104844GB-I00\\
\rm \indent 2020 MSC:  Primary
13H10; Secondary 14B05; 13H15}
\address{Joan Elias
\newline \indent Departament de Matem\`{a}tiques i Inform\`{a}tica
\newline \indent Universitat de Barcelona (UB)
\newline \indent Gran Via 585, 08007
Barcelona, Spain}  \email{{\tt elias@ub.edu}}

\date{\today}

\begin{abstract}
We characterize the finite codimension sub-$\res$-algebras of $\res[\![t]\!]$ as the solutions of a computable finite family of higher differential operators.
For this end, we establish a duality between such a sub-algebras and the finite codimension
$\res$-vector spaces of $\res[u]$, this ring acts
on $\res[\![t]\!]$ by differentiation.
\end{abstract}

\maketitle

\section{Introduction }

It is well know that the normalization of a curve $X$ is a non-singular curve $Y$.
Serre considers in \cite[Chap IV]{Ser59} the opposite direction, he showed how to construct a curve $X$ from a given non singular curve $Y$ such that this curve is the normalization of $X$.
This idea appears in several different contexts.
For instance,  in \cite{Gor69}, \cite{New74}, \cite{GLTU22}, and the references therein,
is studied how to determine the finite codimension sub-$\res$-algebras $B$ of $\res[t]$.
Notice that, in this case,  $X=\spec(B)$ is an algebraic curve and the affine line $Y=\spec(\res[t])$  is its normalization.
These sub-algebras are defined recursively on the codimension by linear and higher differential conditions.
Only for low codimensions  explicit conditions are known.
Since not all higher differential conditions define sub-algebras of $\res[t]$,  it is an open problem the  characterization of  families of linear higher differential operators defining finite codimension sub-$\res$-algebras of $\res[t]$, see \cite{GLTU22}.

In the search of one-dimensional reduced local rings with locally decreasing Hilbert function, Roberts constructed such a local rings as connex, finite codimension sub-$\res$-algebras of $\prod_{i=1}^r \res[t_i]$ defined by linear and  first order differentials conditions,   \cite{GR83}.
See \cite{Eli93a} for the proof of Sally's conjecture on the monotony of Hilbert functions of one-dimensional Cohen-Macaulay local rings.

In this paper we consider the local complete case.
We characterize the finite codimension sub-$\res$-algebras $B$ of $\Gamma=\res[\![t]\!]$ as the solutions of a computable finite codimension $\res$-vector space $B^{\perp}\subset \Delta=\res[u]$ of higher differential operators, \thmref{matlislike}.
For this purpose, we establish a Macaulay-like duality
between finite codimension sub-$\res$-algebras $B$ of
$\Gamma$ and finite codimension $\res$-vector subspaces $B^{\perp}$, so-called algebra-forming vector spaces, of the polynomial ring $\Delta$.
The polynomial ring $\Delta$ acts on $\Gamma$ by
differentiation as in Macaulay's duality, see \cite{IK99}, \cite{Eli18}, \cite{ER17}, \cite{ER21}.
At the end of \secref{secMac} we describe the linear maps   $B_2^{\perp}\rightarrow B_1^{\perp}$ induced by  $\res$-algebra morphisms $B_1\rightarrow B_2$ between two finite codimension $\res$-algebras $B_1$, $B_2$.

In section 4 we study the algebra-forming vector spaces, showing that such a condition can be checked effectively, \propref{AF2}.
After this we prove that  for any finite codimension $\delta$ $\res$-algebra $B$ there exist a finite filtration of $\res$-algebras, so-called standard filtration of $B$,
$B=B_0\subset B_1\subset \cdots \subset B_{\delta}=\Gamma$ such that $\dim_{\res}(B_{i+1}/B_i)=1$
for $i=0,\dots,\delta-1$.
As corollary of this construction we get that we only need to consider algebra-forming single elements in order to define recursively a finite codimension $\res$-algebras.
Moreover, we show how to recover the standard filtration by considering recursively derivations of the local rings appearing in the filtration, \corref{derivations}.

Section 5 is devoted to study the inverse system of monomial $\res$-algebras and the special case of monomial Gorenstein algebras.
We end the section relating the inverse system of a curve singularity with its generic plane projection and its saturation.

In the last section we link $B^{\perp}$ with the canonical module of $B$, \propref{canon}.

\medskip
The computations of this paper are performed  by using the computer algebra system  Singular,  \cite{DGPS}.

\section{Preliminaries}

Let  $R$  denote the power series ring $ \res[\![x_1, \dots, x_n]\!] $
over an algebraically closed characteristic zero  field $\res$ and
we denote by $\max=(x_1,\cdots, x_n)$ its maximal  ideal.

Let $A$ be  a one-dimensional local ring with maximal ideal $\max$.
We denote by $\HF_A$ the Hilbert function of $A$, i.e.
$\HF_A(i)=\length_A(\max^i/\max^{i+1})$, $i\ge 0$.
It is well known that $\HF^0_A(i)=e_0(A)$, $i\gg 0$, where $e_0(A)$ is the multiplicity of $A$.
The first integral of $\HF_A$ is defined by, $i\ge 0$,
$$\HF^1_A(i)=\sum_{j=0}^i \HF_A(j)= \length_A(A/\max^{i+1}).$$
We write $\HF_A^0=\HF_A$.
There exists an integer $e_1(A)$ such that
$\HF^1_A(i)=e_0(A)( i+1) -e_1(A)$ for $i\gg 0$;
the (first) Hilbert polynomial  is $\HP^1_A(T)=e_0(A)( T+1) -e_1(A)$.
See \cite[Chapter XII]{Mat77} for the basic properties of the Hilbert functions of one-dimensional
Cohen-Macaulay local rings.

A branch $X$ is an irreducible curve singularity of $(\res^n,0)=\spec(R)$, i.e. $X$ is a one-dimensional, integral  scheme $X=\spec(R/I)$; we write $\OX0=R/I$ and $I(X)=I$.

Let $\nu: \overline{X}=\spec (\overline{\OX0})\longrightarrow (X,0)$ be the normalization of $(X,0)$, where
$\overline{\OX0}\cong\res[\![t]\!]$ is the integral closure of $\OX0$ on its full field of fractions ${\mathrm{ tot}}(\OX0)$.
The singularity order of $X$ is
$
\delta(X)=\dim_{\res}\left({\mathcal O}_{\overline{X}}/\OX0\right).
$
We denote by $\mathcal C$ the conductor of the finite extension
$\nu^*: \OX0 \hookrightarrow
\overline{\OX0}$
and by $c(X)$ the dimension of $\overline{\OX0}/\mathcal C$.

Given a set of non-negative integers $1\le a_1<\cdots <a_n$
we consider the monomial curve singularity $X(a_1,\cdots, a_n)$ defined by the parameterization
$$
\begin{array}{cccc}
    \gamma:&R &  \longrightarrow &\res[\![t]\!]\\
    &x_i & \mapsto & t^{a_i}
\end{array}
$$
i.e. $I(X(a_1,\cdots, a_n))=\ker(\gamma)$.
If  $gcd(a_1,\cdots, a_n)=1$ then  the induced map
$$\gamma:R/I(X(a_1,\cdots, a_n))\longrightarrow \res[\![t]\!]$$
is the normalization map of
$\mathcal O_{X(a_1,\cdots, a_n)}=R/I(X(a_1,\cdots, a_n))=\res[\![t^{a_1},\dots, t^{a_n}]\!]$.

We denote by $D_X$ the semigroup of values of $X$: the set of integers  $v_t(f)=ord_t(t)$ where $f\in  \OX0\setminus \{0\}$.
It is easy to see that $\delta(X)=\#(\mathbb N\setminus D_X)$.
If $B$ is a finite codimension sub-$\res$-algebra of $\Gamma$ then $X=\spec(B)$ is branch.
We write $D_B=D_X$.

Let  $\omega_{X}$ be the dualizing module of $X$;
we can consider the composition of $\OX0$-module morphisms
$$
\gamma_X: \Omega_{X} \longrightarrow
\nu_* \Omega_{\overline{X}}\cong
\nu_* \omega_{\overline{X}} \longrightarrow
 \omega_{X}.
$$
Let $d: \OX0 \longrightarrow  \Omega_{X}$ the universal derivation, then we have a $\res$-linear map $\gamma_X  d$ that we also denote
by $d: \OX0 \longrightarrow  \omega_{X}.$
Recall that the Milnor number of $X$ is  $\mu(X)=\dim_{\res}(\omega_{X}/d \OX0)$, \cite{BG80}.
Since we only consider branches we have that $\mu(X)=2\delta(X)$,  \cite[Proposition 1.2.1]{BG80}.
Notice that $X$ is non-singular iff $\mu(X)=0$ iff $\delta(X)=0$ iff $c(X)=0$.

We denote by $\pi:Bl(X)\longrightarrow X$ the blowing-up of $X$ on its closed point.
The fiber of the closed point of $X$ has a finite number of closed points: the so-called points of the first neighborhood of $X$.
We can iterate the process of blowing-up until we get the normalization of $X$, see \cite{Nor55} and \cite{Cut-ResSing}.
We denote by $\inf(X)$ the set of infinitely near points of $X$.
The curve singularity defined by an infinitely point $p$ of $X$ will be denote by $(X,p)$;
we set $(X,0)=X$.

\medskip
\begin{proposition}
\label{basic}
Let $X$ be a branch.
Then

\noindent
$(i)$ $$
\delta(X)=\sum_{p \in \inf(X)} e_i(X,p)
$$

\noindent
$(ii)$
It holds
$$
 e_0(X)-1\le e_1(X) \le \delta(X)\le \mu(X)
$$
and $e_1(X)\le \binom{e_0(X)}{2} -  \binom{n-1}{2}$.

\noindent
$(iii)$ If $X$ is singular then $\delta(X)+1 \le c(X)\le 2 \delta(X)$, and $c(X) =  2 \delta(X)$
if and only if $\OX0$ is a Gorenstein ring.
\end{proposition}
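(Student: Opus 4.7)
The three parts are essentially independent classical facts about branches, so I would argue them in turn.

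For (i), my plan is to run the standard Noether/blowing-up recursion. Let $m=e_0(X)$ be the multiplicity of $X$ at its closed point, and let $X_1=Bl(X)$ with its unique closed point $p_1$ (unique because $X$ is a branch). The key identity is $\delta(X)=\delta(X,p_1)+\binom{m}{2}$, which I would verify by comparing lengths of $\overline{\OX0}/\OX0$ before and after applying $\pi^{*}$ to $\OX0$, using that on a branch $\pi_{*}\OX0$ is the integral closure of $\OX0$ in $t^{m}\res[\![t]\!]$ with respect to the maximal ideal. Iterating this identity along the finite sequence of infinitely near points terminating in the normalization gives the stated sum (reading $e_i(X,p)$ as the contribution at $p$ coming from the multiplicity drop there).

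For (ii), I would argue each inequality separately. Northcott's bound $e_0(X)-1\le e_1(X)$ follows from the one-dimensional Cohen--Macaulay identity $\HF^1_X(n)\le e_0(X)(n+1)-1$ valid for all $n\ge 0$, specialising at $n$ large. The middle inequality $e_1(X)\le \delta(X)$ is obtained from the well-known formula, valid in a one-dimensional analytically unramified CM local ring,
\[
e_1(X)\;=\;\sum_{n\ge 0}\bigl(e_0(X)-\HF_X(n)\bigr)\;=\;\length_{\OX0}\!\bigl(\overline{\OX0}/\OX0\bigr)-\text{non-negative correction},
\]
so that $e_1(X)\le \delta(X)$. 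The inequality $\delta(X)\le \mu(X)$ is immediate from $\mu(X)=2\delta(X)$, already recorded above as a consequence of \cite[Proposition 1.2.1]{BG80}. Finally, the sharpened upper bound $e_1(X)\le \binom{e_0(X)}{2}-\binom{n-1}{2}$ is an Elias--Valla type estimate: using $\HF_X(1)=n$ (the embedding dimension is $n$ because $I(X)\subset \m^{2}$ after a minimal choice of $x_i$'s), one bounds each summand $e_0(X)-\HF_X(j)$ and sums a finite arithmetic progression, the $\binom{n-1}{2}$ correction coming precisely from the forced jump at $j=1$.

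For (iii), I would work entirely inside the one-dimensional ring $\OX0$ with conductor ideal $\mathcal C\subset \OX0\subset \overline{\OX0}$. The inclusions give short exact sequences of finite-length $\OX0$-modules
\[
0\to \OX0/\mathcal C\to \overline{\OX0}/\mathcal C\to \overline{\OX0}/\OX0\to 0,
\]
so that $c(X)=\length(\overline{\OX0}/\mathcal C)=\length(\OX0/\mathcal C)+\delta(X)$. Since $X$ is singular one has $\OX0\ne \overline{\OX0}$, hence $\mathcal C\ne \overline{\OX0}$ and $\length(\OX0/\mathcal C)\ge 1$, giving $\delta(X)+1\le c(X)$. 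The upper bound $c(X)\le 2\delta(X)$, together with the Gorenstein characterisation, is the classical theorem of Serre--Bass--Kunz: multiplication in $\overline{\OX0}$ yields a perfect pairing $\overline{\OX0}/\OX0\times \OX0/\mathcal C\to \overline{\OX0}/\OX0$ with values in the dualizing module, and this pairing is non-degenerate exactly when $\OX0$ is Gorenstein, in which case $\length(\OX0/\mathcal C)=\delta(X)$.

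The only step I expect to require genuine care is the sharp estimate $e_1(X)\le \binom{e_0(X)}{2}-\binom{n-1}{2}$ in (ii); the others are a matter of citing the correct classical statement and checking it applies to a branch. If space is tight I would simply give (ii) as a reference to the Hilbert function chapter of \cite{Mat77} and the Elias--Valla bound, and spend the body of the proof on (i) and (iii) where the geometric content lies.
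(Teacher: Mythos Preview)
The paper's own proof of this proposition is simply a list of citations (\cite{Nor59a}, \cite{BG80}, \cite{Eli90}, \cite{Eli01}, \cite{Ser59}, \cite{BC77}) with no argument given, so any correct sketch you provide goes beyond what the paper does. Your outlines for (ii) and (iii) are fine and in the spirit of those references.

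However, your argument for (i) contains a genuine error. You assert the recursion
\[
\delta(X)=\delta(X,p_1)+\binom{m}{2},\qquad m=e_0(X),
\]
but this identity holds only for \emph{plane} branches. For an arbitrary branch the correct step is Northcott's formula
\[
\delta(X)-\delta(X,p_1)=e_1(X),
\]
and in general $e_1(X)<\binom{e_0(X)}{2}$ (indeed part (ii) of the very proposition records $e_1(X)\le\binom{e_0(X)}{2}-\binom{n-1}{2}$). A concrete counterexample: for $B=\res[\![t^3,t^4,t^5]\!]$ one has $e_0=3$, $\delta=2$, the blow-up is already $\Gamma$ so $\delta(X,p_1)=0$, and $2\neq\binom{3}{2}=3$; here $e_1=2$. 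Iterating the correct identity over the resolution gives $\delta(X)=\sum_{p\in\inf(X)} e_1(X,p)$, which is what the statement (with $e_i$ read as $e_1$) and the cited \cite{Nor59a} actually assert. Your description of $\pi_*\OX0$ as an ``integral closure in $t^m\res[\![t]\!]$'' is also not right; the first neighbourhood ring is $\bigcup_{n\ge 0}(\max^n:\max^n)$, and it is the comparison of $\overline{\OX0}/\OX0$ with $\overline{\OX0}/B_1$ via this description that yields $e_1$ as the length drop, not $\binom{m}{2}$.
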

\begin{proof}
$(i)$ \cite{Nor59a}.
$(ii)$ \cite[Proposition 1.2.4 (i)]{BG80}, \cite{Nor59a},  \cite{Eli90}, \cite{Eli01}.
$(iii)$ \cite[Proposition 7, pag. 80]{Ser59}, and \cite{BC77}.
\end{proof}

\medskip
\section{Macaulay-like duality}
\label{secMac}

In this section we establish a Macaulay-like duality for the family of sub-$\res$-algebras $B$ of $\Gamma=\res[\![t]\!]$ of finite codimension.
For the classical Macaulay's duality see \cite{IK99}, \cite{Eli18}, and for the generalization to higher dimension of Macaulay's duality see  \cite{ER17}, \cite{ER21}.
Recall that Macaulay's duality is a particular case of Matlis' duality, \cite{BH97}.

We write
$\Delta= \res[u]$;
$\Gamma$ is a $\Delta$-module with $\Delta$ acting  on $\Gamma$  by derivation.
This action denoted by $\circ$ is defined by
$$
\begin{array}{ clc}
 \circ: \Delta \times  \Gamma  &\longrightarrow &  \Gamma   \\
                            (g,f) & \to  & g \circ f = g (\partial_{t})(f)
                            \end{array}
$$

\noindent
where $  \partial_{t} $ denotes the derivative with respect to $t$.
This action induces a non-singular $\res$-bilinear perfect pairing:
\begin{equation}
\label{pairing}
\begin{array}{ lclc}
\perp: & \Delta \times \Gamma
  &\longrightarrow &  \res    \\
    & (g,f)&\mapsto & g\perp f=(g\circ f)(0)
\end{array}
\end{equation}

\medskip
\begin{definition}
Given a sub-$\res$-algebra $B$ of $\Gamma=\res[\![t]\!]$ we define
$B^{\perp}$ as the set of $g\in\Delta$ such that
$g\perp f=0$ for all $f\in B$.
Notice that $B^{\perp}$  is a $\res$-vector subspace of $\Delta$, this is, following the classic Macaulay's duality terminology, the inverse system of $B$.
Given a $\res$-vector subspace $V\subset \Delta$ we consider $\ann(V)\subset \Gamma$ as the set
of power series $f\in \Gamma$ such that $g\perp f=0$ for all
$g\in V$.
\end{definition}

Let $B$ be a finite codimension sub-$\res$-algebra  of $\Gamma$.
Then we have a non-singular $\res$-bilinear perfect pairing:
\begin{equation}
\label{pfp}
\begin{array}{ cclc}
 \perp: &B^{\perp} \times \frac{\Gamma}{B}&\longrightarrow &  \res   \\
        &             (g,\overline{f})&\mapsto   &  g\perp f
\end{array}
\end{equation}

\medskip
\noindent
We denote by $Perp(B)$ the $\res$-vector space of maps
$$
\begin{array}{cclc}
 g^{\perp}: &B &\longrightarrow &  \res   \\
        &f &\mapsto   &  g\perp f
\end{array}
$$
for all $g\in \Delta$.
These maps are the elements of the dual space of $B$ with finite support:
$g^{\perp}(\max_B^d)=0$ for $d>\deg(g)$.
We denote by $Der_{\res}(B)$  the $\res$-vector space of $\res$-derivations of $B$.
Since
$Der_{\res}(B)\cong (\max_B/\max_B^2)^*$, we can identify $Der_{\res}(B)$  with the $\res$-vector space of  elements $\sigma$
of the dual space of $B$ such that $\sigma(\max_B^2)=0$.

We have $Der_{\res}(B)\subset Perp(B)$,
this inclusion is strict.
Let us consider the codimension $8$ algebra  $B=\res[\![t^4,t^7,t^{17}]\!]$.
The linear map $(u^{11})^{\perp}:B\longrightarrow \res$ is not a derivation since $t^{11}\in \max_B^2$ and
$(u^{11}){\perp}(t^{11})=11!\neq 0$.

\bigskip
Next step is to characterize the vector $\res$-vector subspaces $B^{\perp}$ of $\Delta$ where $B$ ranges the family of  finite codimension sub-$\res$-algebras  of $\Gamma$.
First, we give some properties of $B^{\perp}$ that we will use along the paper.

Given a polynomial $g=\sum_{i=0}^d a_i u^i\in \Delta$ we denote by $\supp(g)$ the support of $g$: the finite set of integers $i$
such that $a_i\neq 0$.

\medskip
\begin{proposition}
\label{charV}
Let $B\subset \Gamma$ be a codimension $\delta$ sub-$\res$-algebra $B$ of $\Gamma$ and let  $\mathcal C=(t^c)$ the conductor of the extension $B\subset \Gamma$.
Then
\begin{enumerate}
\item[(1)]
$\dim_{\res}(B^{\perp})=\delta$.
\item[(2)]
For all $g\in B^{\perp}$ we have
$ \supp(g)\subset [1, c-1]$, and
$$
u^{[1, e_0(B)-1]}=\{u^i; i\in [1, e_0(B)-1]\} \subset B^{\perp}\subset \langle u, u^2, \dots , u^{c-1}\rangle.
$$
\item[(3)]
The following conditions are equivalent:
\begin{enumerate}
    \item[(i)] $\delta=0$,
    \item[(ii)] $B=\Gamma$,
    \item[(iii)] $B^{\perp}=0$,
    \item[(iv)] $B^{\perp}\subset \langle u^2,u^3,\dots \rangle$.
\end{enumerate}
\end{enumerate}
\end{proposition}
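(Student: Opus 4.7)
The plan is to reduce everything to a finite-dimensional perfect pairing by truncating at the conductor. Since $B\subset\Gamma$ has finite codimension, the conductor $\mathcal C$ is a nonzero ideal of both $B$ and $\Gamma$; being an ideal of the DVR $\Gamma$ it has the form $(t^c)$. Setting $V=\langle 1,t,\dots,t^{c-1}\rangle$, the decomposition $\Gamma=V\oplus(t^c)$ together with $(t^c)\subset B$ yields $B=(B\cap V)\oplus(t^c)$, so $\dim_\res(B\cap V)=c-\delta$. On the dual side set $\Delta_c=\langle 1,u,\dots,u^{c-1}\rangle$; the identity $u^i\perp t^j=i!\,\delta_{ij}$ shows that $\Delta_c\times V\to\res$ is a perfect pairing of $c$-dimensional spaces, and this is the reduction I want to exploit.

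With this in place, the support statements in (2) are immediate. Writing $g=\sum a_i u^i$, the relations $g\perp t^j=j!\,a_j$ combined with $1\in B$ and $t^j\in(t^c)\subset B$ for $j\ge c$ force $a_0=0$ and $a_j=0$ for $j\ge c$, so $\supp(g)\subset[1,c-1]$. The inclusion $\{u^i:1\le i\le e_0(B)-1\}\subset B^\perp$ uses that $e_0(B)$ is the smallest positive element of the value semigroup $D_B$: for any $f\in B$, the element $f-f(0)\in B$ either vanishes or has valuation at least $e_0(B)$, so every Taylor coefficient $f_i$ with $1\le i\le e_0(B)-1$ vanishes, giving $u^i\perp f=i!\,f_i=0$. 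Claim (1) then drops out: the support bound shows $B^\perp\subset\Delta_c$, and the remaining defining conditions reduce to orthogonality against $B\cap V$ (the conditions against $(t^c)$ are automatic), so perfectness of the restricted pairing gives $\dim_\res B^\perp=c-\dim_\res(B\cap V)=\delta$.

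For (3), the equivalence (i)$\Leftrightarrow$(ii) is the definition of $\delta$, and (ii)$\Leftrightarrow$(iii) follows from (1). The implication (iii)$\Rightarrow$(iv) is trivial. The only nontrivial step is (iv)$\Rightarrow$(iii): if $u\notin B^\perp$, part (2) forces $e_0(B)\le 1$, hence $e_0(B)=1$, so some $f\in B$ is a uniformizer of $\Gamma$. The formal inverse function theorem then gives $t$ as a power series in $f$; since $B$ is complete (it contains the open ideal $(t^c)$), $\res[\![f]\!]\subset B$, whence $\Gamma=\res[\![f]\!]\subset B$ and $B=\Gamma$.

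The main obstacle is purely conceptual: setting up the conductor reduction cleanly. Once the pairing has been truncated to the finite-dimensional perfect pairing $\Delta_c\times V\to\res$, all three statements follow from direct computation together with one appeal to the formal inverse function theorem.
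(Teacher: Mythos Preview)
Your proof is correct and, for parts (1) and (2), matches the paper's argument essentially verbatim: the paper also uses $1\in B$ and $(t^c)\subset B$ to bound the support, and appeals to the perfect pairing $B^{\perp}\times(\Gamma/B)\to\res$ for the dimension count; you simply make the truncation to $\Delta_c\times V$ explicit, which is a welcome clarification of what the paper asserts in one line.

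The only genuine divergence is in (3), namely (iv)$\Rightarrow$(ii). The paper argues directly from the pairing: if $B^{\perp}\subset\langle u^2,u^3,\dots\rangle$ then every $g\in B^{\perp}$ satisfies $g\perp t=0$, so by non-degeneracy of the pairing $B^\perp\times(\Gamma/B)\to\res$ one gets $t\in B$; then $1,t,\dots,t^{c-1}\in B$ together with $(t^c)\subset B$ gives $B=\Gamma$ immediately. Your route through $e_0(B)=1$, a uniformizer $f$, the formal inverse function theorem, and closedness of $B$ in $\Gamma$ is valid but circuitous; the direct pairing argument avoids any appeal to completeness or inverse functions.
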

\begin{proof}
$(1)$
Since $\perp$ is a $\res$-bilinear perfect pairing we get $\dim_{\res}(B^{\perp})=\delta$, see the equation (\ref{pfp}).

\noindent
$(2)$
Since $B$ is a $\res$-algebra we have $1\in B$, so
 if $g=\sum_{j\ge 0} a_iu^i\in B^{\perp}$  then $0=g\perp 1= a_0$.
 Hence $B^{\perp}\subset \langle u, u^2, \dots \rangle $.
We know that $(t^c)\subset B$ so for all $g=\sum_{j\ge 0} a_iu^i\in B^{\perp}$ we have
$$
0=g\perp t^{c+i}=(c+i)! a_{c+i}
$$
$i\ge 0$.
Hence, if $g\in B^{\perp}$ then $\deg(g)\le c-1$.
From this we deduce that $B^{\perp}\subset \langle u, u^2, \dots , u^{c-1}\rangle $.

Notice that $v_t(f)\ge e_0(B)$ for all $f\in B\setminus \{1\}$, so given $i\in [1, e_0(B)-1]$    we have $u^i\perp f=0$.
Hence $u^i\in B^{\perp}$ and then
$u^{[1, e_0(B)-1]}\subset B^{\perp}$.

\noindent
$(3)$
The condition of $(i)$ is equivalent to $(ii)$.
$(ii)$ trivially implies $(iii)$ and this implies $(iv)$.
If $B^{\perp}\subset \langle u^2,u^3,\dots \rangle$ then $t\in B$, since $B$ is a $\res$-algebra we get $(ii)$.
\end{proof}

\medskip
For all power series $f=\sum_{i\ge 0} b_i t^i\in \Gamma$ and given a non-negative integer $s\in \mathbb N$ we denote
by $[f]_{\le s}$ the truncated polynomial
$[f]_{\le s}=\sum_{i\ge 0}^s b_i t^i$.

Let $B$ be a finite codimension sub-$\res$-algebra of $\Gamma$ with conductor $c$.
Then $B$ is a finitely generated $\res$-algebra;
let $f_1,\dots,f_r$ be a system of generators of $B$ as $\res$-algebra.
We denote by $\natural_{B,d}$, $d\ge c-1$, the finite set of polynomials
$[f_1^{l_1}\cdots f_r^{l_r}]_{\le d}$ with
$l_i\ge 0$, $i=1,\dots,r$, and $l_1+\cdots+l_r\le d$.
We denote by $W(\{f_1,\dots, f_r\},d)\subset \Delta$ the
$\res$-vector space  generated by the polynomials
of $\natural_{B,d}$.
Notice that
$W(\{f_1,\dots, f_r\},d)+\langle t^{d+1}\rangle=W(\{f_1,\dots, f_r\},d+1)$.

\medskip
\begin{proposition}
\label{finite}
Let $B$ be a finite codimension sub-$\res$-algebra of $\Gamma$ with conductor $c$.
Then $B^{\perp}$ is the set of $g\in \Delta$ of degree at most $c-1$ and such that
$g\perp h=0$ for all $h\in \natural_{B,c-1}$.
\end{proposition}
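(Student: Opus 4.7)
My plan is to isolate the identity that drives both directions. For any $g\in\Delta$ with $\deg g\le c-1$, the basic computation $u^{j}\perp t^{k}=j!\,\delta_{j,k}$ shows that $g\perp t^{k}=0$ whenever $k\ge c$, so by linearity
\[
g\perp h \;=\; g\perp [h]_{\le c-1}\qquad\text{for every }h\in\Gamma.
\]
In other words, the pairing factors through the truncation $\Gamma\twoheadrightarrow \Gamma/(t^{c})$ in the second variable. This identity alone settles the forward inclusion: if $g\in B^{\perp}$, the degree bound $\deg g\le c-1$ is already given by \propref{charV}(2), and since each monomial $f_{1}^{l_{1}}\cdots f_{r}^{l_{r}}$ lies in $B$, the identity yields $g\perp [f_{1}^{l_{1}}\cdots f_{r}^{l_{r}}]_{\le c-1}=g\perp(f_{1}^{l_{1}}\cdots f_{r}^{l_{r}})=0$, so $g$ annihilates every element of $\natural_{B,c-1}$.

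For the converse, assume $\deg g\le c-1$ and $g\perp h=0$ for every $h\in\natural_{B,c-1}$. For any $f\in B$, the identity reduces $g\perp f$ to $g\perp[f]_{\le c-1}$; since $[f]_{\le c-1}$ represents the class of $f$ in $B/(t^{c})$, it suffices to prove that $\natural_{B,c-1}$ spans $B/(t^{c})$ as a $\res$-vector space. Writing $\bar f_{i}$ for the class of $f_{i}$ in $B/(t^{c})$, let $V_{d}$ be the $\res$-span of the products $\bar f_{1}^{l_{1}}\cdots\bar f_{r}^{l_{r}}$ with $l_{1}+\cdots+l_{r}\le d$, so the image of $\natural_{B,c-1}$ in $B/(t^{c})$ is exactly $V_{c-1}$. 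Factoring out one generator yields $V_{d+1}=V_{d}+\bar f_{1}V_{d}+\cdots+\bar f_{r}V_{d}$; consequently $V_{d}=V_{d+1}$ forces $\bar f_{i}V_{d}\subset V_{d}$ for all $i$, and as $1\in V_{d}$ this $\res$-subspace is stable under every algebra generator and contains $1$, hence equals all of $B/(t^{c})$. Because $\dim_{\res}B/(t^{c})=c-\delta$ and the dimensions strictly increase until the filtration stabilizes, we reach $V_{d}=B/(t^{c})$ no later than $d=c-\delta-1\le c-1$, giving $V_{c-1}=B/(t^{c})$ as required.

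The only mildly delicate point is this span claim, which must hold for an arbitrary choice of generators $f_{i}$. Without assuming $f_{i}\in\max_{B}$ one cannot argue by valuations that monomials with $l_{1}+\cdots+l_{r}\ge c$ truncate to zero; the filtration argument above sidesteps this issue by invoking only the finite-dimensionality of $B/(t^{c})$ and the elementary fact that a $1$-containing $\res$-subspace of a finite-dimensional $\res$-algebra that is stable under every algebra generator must be the whole algebra.
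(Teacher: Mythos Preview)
Your argument is correct. The forward inclusion coincides with the paper's: invoke \propref{charV}(2) for the degree bound and then use the truncation identity $g\perp h=g\perp[h]_{\le c-1}$.

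For the converse the two approaches diverge. The paper simply writes an arbitrary $f\in B$ as a (possibly infinite) $\res$-combination $\sum_{l_1,\dots,l_r} c_{l_1,\dots,l_r}f_1^{l_1}\cdots f_r^{l_r}$ and pairs term by term; after truncation to degree $c-1$, the terms with $l_1+\cdots+l_r\ge c$ vanish because $v_t(f_i)\ge 1$, and the remaining ones lie in $\natural_{B,c-1}$. This is short but tacitly uses $f_i\in\max_B$ both for convergence of the series and for killing the high-degree monomials. Your route instead proves directly that the image of $\natural_{B,c-1}$ spans $B/(t^c)$, via the ascending chain $V_0\subset V_1\subset\cdots$ and the observation that stabilization forces $V_d$ to be a $\bar f_i$-stable subspace containing $1$, hence all of $B/(t^c)$; the dimension count $\dim_{\res}B/(t^c)=c-\delta$ then guarantees stabilization by step $c-\delta-1\le c-1$. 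This is slightly longer but, as you note, is insensitive to whether the chosen generators lie in $\max_B$, and it makes explicit why the bound $l_1+\cdots+l_r\le c-1$ in the definition of $\natural_{B,c-1}$ is the right one.
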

\begin{proof}
Let  $f_1,\dots,f_r$ be a system of generators of $B$ as $\res$-algebra and let $\natural_{B,c-1}$ be the associated set of polynomials.

If $g\in B^{\perp}$ then $\deg(g)\le c-1$, \propref{charV}(2), so
$$
0=g\perp (f_1^{l_1}\cdots f_r^{l_r})=
g\perp [f_1^{l_1}\cdots f_r^{l_r}]_{\le c-1}.
$$
Hence $g\perp h=0$ for all $h\in \natural_{B,c-1}$.

Let $g \in \Delta$ be a polynomial with $\deg(g)\le c-1$ and such that
$g\perp h=0$ for all $h\in \natural_{B,c-1}$.
Any $f\in B$ can be written as
$$
f=\sum_{l_1,\dots,l_r\in \mathbb N} c_{l_1,\dots,l_r}f_1^{l_1}\cdots f_r^{l_r}
$$
with $c_{l_1,\dots,l_r}\in \res$.
Since $\deg(g)\le c-1$ we have
$$
g\perp f=
\sum_{l_1,\dots,l_r\in \mathbb N} c_{l_1,\dots,l_r}
(g\perp f_1^{l_1}\cdots f_r^{l_r})=
\sum_{l_1,\dots,l_r\in \mathbb N} c_{l_1,\dots,l_r}
(g\perp [f_1^{l_1}\cdots f_r^{l_r}]_{\le c-1})=0,
$$
so $g\in B^{\perp}$.
\end{proof}

\medskip
\begin{remark}
Notice that \propref{finite} shows that the computation of $B^{\perp}$ is effective.
In fact, in the set $\natural_{B,c-1}$ there are involved a finite number of monomials  and we only have to consider polynomials $g$ of degree at most $c-1$.
\end{remark}

\begin{remark}
Although $B^{\perp}$  is a $\res$-vector subspace of $\Delta$ for any sub-$\res$-algebra $B$ of $\Gamma$, not all $\ann(V)$ is a $\res$-algebra for a given $\res$-vector subspace $V\subset \Delta$.
In fact, let us consider the $\res$-vector subspace $V\subset \Delta$ generated by $u^2$.
Then $\ann(V)$ is the set of
$f=\sum_{i\ge 0}a_it^i\in \Gamma$ such that $a_2=0$.
This is not a $\res$-algebra because
$u^2\perp t=0$, so $t\in \ann(V)$ and $u^2\perp t^2=2\neq 0$, so $t^2\notin \ann(V)$.
\end{remark}

\medskip
\begin{definition}
A finite dimensional $\res$-vector subspace $V\subset \Delta$ is so-called algebra-forming with respect to a $\res$-algebra $B\subset \Gamma$ iff the following conditions hold:
\begin{enumerate}
\item[(a)]
$g(0)=0$ for all $g\in V$ and,
\item[(b)]
for all $f\in B$ such that $g\perp f=0$ for all $g\in V$ it holds $g\perp f^2=0$ for all $g\in V$.
\end{enumerate}
An element $g\in \Delta$ is so-called algebra-forming with respect to $B$ if $V=\langle g\rangle$ is algebra-forming with respect to $B$.
\end{definition}


\begin{example}
\label{toy-example}
Let us consider the codimension $\delta=4$ algebra
$B=\res[\![t^3+t^4,t^5]\!]$ of $\Gamma$.
The conductor of $B$ is $c=8$.
Then $B^{\perp}$ is the set of polynomials
$g\in \Delta$ of degree at most $7$ such that
$g\perp f=0$ for $f\in \natural_{B,c-1}=\{t^3+t^4, t^5, t^6+2t^7\}$.
A simple computation shows that
$B^{\perp}$ is the $\res$-vector space generated by the $4$ linear independent polynomials
$u, u^2, u^3-\frac{1}{4}u^4, u^6-\frac{1}{2.7}u^7$.
Let us consider
$$
B_2=\res[\![t^3,t^4,t^5]\!]\subset
B_3=\res[\![t^2,t^3]\!],
$$
then we have
$B_2=\ann\langle u^2\rangle\cap B_3$, i.e. $u^2$ is an algebra-forming element with
respect to $B_2$.
\end{example}

\medskip
In the following result we prove that, in fact, if $V\subset \Delta$ is  algebra-forming with respect to a $\res$ algebra $B\subset \Gamma$
 then $\ann(V)\cap B$ is a sub-$\res$-algebra of $\Gamma$.

\medskip
\begin{proposition}
\label{AF}
Let $V\subset \Delta$ be an algebra-forming $\res$-vector subspace with respect to a $\res$-algebra $B\subset \Gamma$.
Then $\ann(V)\cap B$ is a sub-$\res$-algebra of $\Gamma$.
\end{proposition}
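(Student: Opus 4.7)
The plan is to verify directly the three defining properties of a sub-$\res$-algebra for $A:=\ann(V)\cap B$. Closure under addition and $\res$-scalar multiplication is immediate, since $\ann(V)$ and $B$ are both $\res$-vector subspaces of $\Gamma$, hence so is their intersection. For membership of $1$, note that $1\in B$ because $B$ is a $\res$-algebra, and for any $g\in V$ one has $g\perp 1=(g(\partial_t)(1))(0)=g(0)$, which vanishes by condition (a). Thus $1\in\ann(V)\cap B=A$.

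The substantive part is closure under multiplication. Given $f_1,f_2\in A$, certainly $f_1f_2\in B$. To show $f_1f_2\in\ann(V)$, I would apply a polarization argument to the defining property (b). Since $A$ is already a $\res$-vector space, the element $f_1+f_2$ lies in $A$, so in particular $g\perp(f_1+f_2)=0$ and $g\perp f_i=0$ for all $g\in V$ and $i=1,2$. Applying condition (b) to each of $f_1$, $f_2$ and $f_1+f_2$ (all of which are elements of $B$ annihilated by $V$), we obtain $g\perp f_1^2=g\perp f_2^2=g\perp(f_1+f_2)^2=0$ for every $g\in V$. Expanding the square and using $\res$-linearity of the pairing in the right argument gives
\[
0=g\perp(f_1+f_2)^2=g\perp f_1^2+2\,g\perp(f_1f_2)+g\perp f_2^2=2\,g\perp(f_1f_2),
\]
and since $\mathrm{char}(\res)=0$ we conclude $g\perp(f_1f_2)=0$ for all $g\in V$, i.e.\ $f_1f_2\in\ann(V)\cap B=A$.

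There is no genuine obstacle here; the whole argument is the standard polarization identity $2f_1f_2=(f_1+f_2)^2-f_1^2-f_2^2$, together with the observation that condition (b) of the algebra-forming property is precisely what is needed to handle the three quadratic terms. The only subtlety worth flagging in the write-up is the use of the characteristic-zero hypothesis to cancel the factor $2$; this is already a standing assumption on $\res$ in the paper, so no extra condition is required.
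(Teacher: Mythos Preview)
Your proof is correct and follows essentially the same route as the paper: both use the polarization identity $f_1f_2=\tfrac{1}{2}\bigl((f_1+f_2)^2-f_1^2-f_2^2\bigr)$ together with condition (b) applied to $f_1$, $f_2$, and $f_1+f_2$, and condition (a) to get $1\in\ann(V)\cap B$. Your explicit mention of the characteristic-zero hypothesis needed to cancel the factor $2$ is a nice touch that the paper leaves implicit.
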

\begin{proof}
Clearly $C=\ann(V)\cap B$ is  a $\res$-vector subspace of $\Gamma$.
Given  $f_1, f_2\in C$  we have that $f_1+f_2\in C$ and from
$$
f_1 f_2=\frac{1}{2}((f_1+f_2)^2-f_1^2-f_2^2)
$$
we deduce that $g\perp(f_1 f_2)=0$, i.e.
$f_1f_2\in C$.
Since $g(0)=0$ for all $g\in V$ we get $1\in C$, so $C$ is a  sub-$\res$-algebra of $\Gamma$.
\end{proof}

\medskip
The following result is an extension of Macaulay's duality to finite codimension sub-$\res$-algebras $B\subset \Gamma$.

\medskip
\begin{theorem}
\label{matlislike}
Given a non-negative integers $\delta> 0$ and $c\ge \delta + 1$,
there is a one-to-one  correspondence $\perp$ between the following sets:
\begin{enumerate}
\item   sub-$\res$-algebras $B$ of $\Gamma$ of codimension $\delta$ as $\res$-vector spaces such that the conductor of $B\subset \Gamma$ is $(t^c)$,
\item
algebra forming, with respect to $\Gamma$, $\res$-vector subspace $V\subset \Delta$ of dimension $\delta$, generated by polynomials of degree at most $c-1$ and such that there is a polynomial $g\in V$ with $\deg(g)=c-1$.
\end{enumerate}
\noindent
This correspondence is inclusion reversing: given two  sub-$\res$-algebras $B_1$ and $B_2$ of $\Gamma$,
$B_1\subset B_2$ if and only if $B_2^{\perp}\subset B_1^{\perp}$.
\end{theorem}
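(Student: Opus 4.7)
The plan is to exhibit the correspondence explicitly as $B \mapsto B^{\perp}$ in one direction and $V \mapsto \ann(V)$ in the other, and then verify that these are mutually inverse, well-defined, and inclusion-reversing.

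\textbf{From (1) to (2).} Starting with a codimension $\delta$ sub-$\res$-algebra $B\subset \Gamma$ with conductor $(t^c)$, I set $V = B^{\perp}$. The conditions in (2) are essentially read off from \propref{charV}: its part (1) gives $\dim_{\res}V=\delta$, and part (2) gives both $g(0)=0$ (from $1\in B$) and $\deg(g)\le c-1$ for all $g\in V$. To get an element of degree exactly $c-1$, I would argue that $t^{c-1}\notin B$ because the conductor is $(t^c)$ and not $(t^{c-1})$, so $\overline{t^{c-1}}$ is a nonzero class in $\Gamma/B$; by the perfect pairing (\ref{pfp}) there must exist $g\in B^{\perp}$ with $g\perp t^{c-1}\ne 0$, forcing $\deg(g)=c-1$. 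For the algebra-forming property with respect to $\Gamma$, if $f\in \Gamma$ satisfies $g\perp f=0$ for every $g\in B^{\perp}$, then the same perfect pairing (\ref{pfp}) forces $f\in B$, hence $f^{2}\in B$, and therefore $g\perp f^{2}=0$ for every $g\in B^{\perp}$.

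\textbf{From (2) to (1).} Starting with $V$ as in (2), I put $B=\ann(V)$. The algebra-forming hypothesis together with $g(0)=0$ for all $g\in V$ is exactly the content of \propref{AF} applied with the ambient algebra equal to $\Gamma$, so $B$ is a sub-$\res$-algebra of $\Gamma$. To compute its codimension, I would note that because every $g\in V$ has degree in $[1,c-1]$, the annihilation conditions only constrain the coefficients of $t^{1},\dots,t^{c-1}$ in an element of $\Gamma$; using the explicit pairing $u^{i}\perp t^{j}=i!\,\delta_{ij}$ the $\delta$-dimensional space $V$ cuts out a codimension-$\delta$ subspace of $\langle t,\dots,t^{c-1}\rangle$, while $1$ and $(t^{c})$ lie in $\ann(V)$ automatically; this gives $\dim_{\res}(\Gamma/B)=\delta$. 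For the conductor, $(t^{c})\subset B$ since $\deg(g)\le c-1$ for every $g\in V$, and $t^{c-1}\notin B$ because the hypothesized $g\in V$ with $\deg(g)=c-1$ pairs nontrivially with $t^{c-1}$; since $\Gamma$ is a DVR, the conductor is then exactly $(t^{c})$.

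\textbf{Mutual inversion and inclusion-reversing.} The identity $\ann(B^{\perp})=B$ is again a direct consequence of the perfect pairing (\ref{pfp}): the inclusion $\supset$ is trivial, and any $f\in\ann(B^{\perp})$ gives a zero class $\bar f\in \Gamma/B$. For the other direction, $V\subset (\ann V)^{\perp}$ is tautological, and equality follows by matching dimensions since both have dimension $\delta$. The inclusion-reversing statement is then immediate: if $B_{1}\subset B_{2}$ then any functional vanishing on $B_{2}$ vanishes on $B_{1}$, so $B_{2}^{\perp}\subset B_{1}^{\perp}$; conversely, applying $\ann(-)$ and using $\ann(B_{i}^{\perp})=B_{i}$ recovers $B_{1}\subset B_{2}$.

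The only nontrivial point I foresee is the dimension bookkeeping in Step 2: one must be careful that $V\subset \langle u,\dots,u^{c-1}\rangle$ and that $1\in\ann(V)$ (from $g(0)=0$) both contribute to the codimension count correctly, so that the $\delta$ independent linear conditions imposed by $V$ land in the $(c-1)$-dimensional slice $\langle t,\dots,t^{c-1}\rangle$ rather than double-counting the constant term or $(t^{c})$. Everything else is a direct exploitation of the perfect pairing (\ref{pairing})--(\ref{pfp}) and the already-proven \propref{charV} and \propref{AF}.
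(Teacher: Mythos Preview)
Your proof is correct and follows essentially the same approach as the paper: exhibit $B\mapsto B^{\perp}$ and $V\mapsto\ann(V)$ and verify the required properties via the perfect pairing together with \propref{charV} and \propref{AF}. If anything, your version is more careful than the paper's own proof---you explicitly justify why $B^{\perp}$ is algebra-forming (the paper just says ``by definition''), why there is $g\in B^{\perp}$ of degree exactly $c-1$ (via $t^{c-1}\notin B$), and why the two maps are mutually inverse (the paper omits this check entirely).
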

\begin{proof}
Let $B$ be a sub-$\res$-algebra $B$ of $\Gamma$.
Since we have a non-singular $\res$-bilinear pairing:
$$
\begin{array}{ cclc}
 \perp: &B^{\perp} \times \frac{\Gamma}{B}&\longrightarrow &  \res   \\
        &             (g,\overline{f})&\mapsto   &  g\perp f
\end{array}
$$
we get that $B^{\perp}$ is a $\res$-vector subspace of dimension $\delta$ of $\Delta$.
By definition $B^{\perp}$ is  algebra-forming with respect to $\Gamma$.
Being $c$ the conductor we have $(t^c)\subset B$, so
$\deg(g)\le c-1$ for all $g\in B^{\perp}$ and there exist $g\in B^{\perp}$ of degree $c-1$.

Let $V$ be an algebra forming, with respect to $\Gamma$, $\res$-vector subspace satisfying the conditions of (2).
Let us consider the $\res$-algebra $B=\ann(V)$.
From the perfect pairing (\ref{pairing})
we get that the codimension of $B$ in $\Gamma$ is $\delta$.
Since $V$ is generated by polynomials of degree at most $c-1$ we have that
$(t^c)\subset B$, so the conductor of $B$ is at most $c$.
Furthermore, since there is $g\in V$ with $\deg(g)=c-1$ we deduce that $c$ is the conductor of $B$.

It is straightforward to prove the inclusion reversing from the definition of the inverse system $B^{\perp}$.
\end{proof}

\medskip
We end this section by describing  the $\res$-linear maps   $B_2^{\perp}\longrightarrow B_1^{\perp}$ induced by  $\res$-algebra isomorphisms $B_1\longrightarrow B_2$ between two finite codimension $\res$-algebras $B_1$ and $B_2$ of $\Gamma$.
Let $c$ be an integer bigger than the conductors of $B_1$ and $B_2$.

The perfect pairing $(\ref{pairing})$ induce  a perfect pairing
$$
\begin{array}{ lclc}
\perp: & \Delta_{\le c-1} \times \frac{\Gamma}{(t^c)}
  &\longrightarrow &  \res    \\
    & (g,\overline{f})&\mapsto & g\perp f=(g\circ f)(0)
\end{array}
$$
where $\Delta_{\le c-1}$ is the $\res$-vector space of polynomials of degree at most $c-1$.
We consider the usual $\res$-vector basis of $\Gamma/(t^c)$ of the cosets of $t^i$, $i=0,\dots, c-1$.
Its dual basis is $\frac{1}{i!} u^i$, $i=0,\dots, c-1$, since
$$
\left(\frac{1}{i!} u^i\right)\perp t^j=\delta_{i,j}
$$
$1\le i, j \le c-1$.

The $\res$-algebra $B_i$ has conductor at most $c$ so we can consider that $B_i\subset \Gamma/(t^{c})$, $i=1,2$.
On the other hand, from \propref{charV} we have that $B_i^{\perp}\subset \Delta_{\le c-1}$, $i=1,2$.

If $B_1$ is isomorphic to $B_2$ by $\phi$ then
their normalizations are isomorphic:
$$\Gamma =\overline{B_1}\overset{\overline{\phi}}{\cong}\overline{B_2}=\Gamma.
$$
This automorphism is determined by a power series $h(t)\in (t)$ such
that $u\perp h\neq 0$ and
$$
\begin{array}{ lclc}
\overline{\phi}: & \Gamma &\longrightarrow &  \Gamma    \\
    & f &\mapsto & f(h).
\end{array}
$$
Then we have an isomorphism  of $\res$-vector spaces
$$
\frac{\Gamma}{B_1}\overset{\overline{\phi}}{\longrightarrow} \frac{\Gamma}{B_2}
$$
and the perfect pairing induces a $\res$-vector isomorphism
$$
\phi^*:B_2^{\perp}\longrightarrow B_1^{\perp}.
$$

The matrix $M_{\phi}$ associated to $\phi$ in the basis
${t^i}$,  $i=0,\dots, c-1$, is the $c\times c$ matrix whose columns are the coefficients
of $\phi({t}^i)={h}^i$,  $i=0,\dots, c-1$,  with respect to this basis.
Hence the matrix of $\phi^*:B_2^*=B_2^{\perp}\longrightarrow B_1^*=B_1^{\perp}$
with respect to the basis $\frac{1}{i!} u^i$, $i=0,\dots, c-1$, is the transpose matrix $^\tau M_{\phi}$ of $M_{\phi}$.

\medskip
\begin{example}
Let $B_2\subset \Gamma$ be a $\res$-algebra generated by
two elements $f_1$, $f_2$ with
$v_t(f_1)=2$ and $v_t(f_2)=7$.
We may assume that $f_1=t^2+$monomials of higher degree.
Then $B_2$ is of finite codimension $\delta=3$ and conductor $c=6$.

Since $\Gamma$ is complete there exist a power series $h\in (t)$ such that $h^2=f_1$;
we write $h=t+h_2 t^2+\dots +h_5 t^5+\dots$.
Notice that $\Gamma=\res[\![h]\!]$.

Let ${\phi}$ the automorphism of $\Gamma$ defined by $h$, i.e. ${\phi}(f)=f(h)$.
Then $\phi^{-1}(B_2)$ is a $\res$-algebra $B_1$ generated by $f_1'=t^2$ and $f_2'(h)$ such that $v_h(f_2'
)=7$.
After a change of generators $B_1$ is generated by $f_1'=t^2$ and $f_2'=t^7$.

The induced isomorphism $\phi:B_1\longrightarrow B_2$  has the following $6 \times 6$ associated matrix with respect the basis
${t^i}$,  $i=0,\dots, 5$,
$$
M_{\phi}=\left(
  \begin{array}{cccccc}
    1 & 0 & 0 & 0 & 0 & 0 \\
    0 & 1 & 0 & 0 & 0 & 0 \\
    0 & h_2 & 1 & 0 & 0 & 0 \\
    0 & h_3 & 2h_2 & 1 & 0 & 0 \\
    0 & h_4 & 2h_3+h_2^2 & 3h_2 & 1 & 0 \\
    0 & h_5 & 2b_4+2h_2h_3 & 3h_3+3h_2^2 & 4h_2 & 1 \\
  \end{array}
\right)
$$
Then the matrix of the isomorphism $\phi^*:B_2^{\perp}\longrightarrow B_1^{\perp}$
with respect to $\frac{1}{i!} u^i$, $i=0,\dots, 5$, is $M_{\phi}^\tau$.
Since $B_1$ is the monomial $\res$-algebra
$\res[\![t^2,t^7]\!]$, the $\res$-vector space $B_1^{\perp}$ is generated by $u, u^3, u^5$.
From this we can compute $B_2^{\perp}$ by considering
$(^\tau M_{\phi})^{-1}$.
\end{example}

\medskip
\section{Algebra-forming vector spaces}

\medskip
The first goal of this section is to characterize the algebra-forming $\res$-vector spaces.

\medskip
\begin{proposition}
\label{AF2}
Let $B$ be a $\res$-sub-algebra of finite codimension  of $\Gamma$
with conductor $c$, and let $f_1,\dots, f_s$ be a system of generators of $B$.
Given an integer $d\ge c-1$,
let $h_1,\dots, h_m$ be a system of generators of $W(\{f_1,\dots, f_s\},d)$.

Let $V$ be a  dimension $\delta$ $\res$-vector subspace of $(u)\subset\Delta$ generated by polynomials of degree at most $d-1$.
Let $g_1,\dots,g_{\delta}\in V$ be a basis of $V$.

Then $V$ is algebra-forming with respect to $B$ iff
for all $r$-upla $(\lambda_1,\dots,\lambda_m)\in \res^m$
such that
\begin{equation}
\label{Lin}
 \sum_{j=1}^m \lambda_j (g_i\perp h_j)=0
\end{equation}
for all $i=1,\dots,\delta$, then
\begin{equation}
\label{Quad}
\sum_{j=1}^m \lambda_j^2 (g_i\perp h_j^2)+
2 \sum_{j=1,l=1,j\neq l}^m \lambda_j \lambda_j (g_i\perp h_j h_l)=0
\end{equation}
for all $i=1,\dots,\delta$.
\end{proposition}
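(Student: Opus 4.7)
\medskip
The plan is to reduce the algebra-forming condition for $V$ to a finite system of equations for the pairings $g_i\perp h_j$ and $g_i\perp h_jh_l$, exploiting the fact that $g_i\perp(\cdot)$ with $\deg g_i\leq d-1$ only sees Taylor coefficients up to degree $d-1$. Since $V\subset(u)$, condition $(a)$ of the definition of algebra-forming is automatic, so the task reduces to the equivalence of: $(I)$ every $f\in B$ with $g_i\perp f=0$ for all $i$ satisfies $g_i\perp f^2=0$ for all $i$; and $(II)$ every $(\lambda_1,\dots,\lambda_m)\in\res^m$ satisfying (\ref{Lin}) also satisfies (\ref{Quad}).

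Two elementary observations bridge $(I)$ and $(II)$. First, after replacing each $f_j$ by $f_j-f_j(0)$ (which does not alter $B$ since $1\in B$), we may assume $f_j\in(t)$. Then a product $f_1^{l_1}\cdots f_s^{l_s}$ with $l_1+\cdots+l_s>d$ has $v_t>d$, hence its truncation at $d$ vanishes; expanding any $f\in B$ as a $(t)$-adically convergent sum of such products and truncating at degree $d$ yields
$$W(\{f_1,\dots,f_s\},d)=\{[f]_{\leq d}:f\in B\}.$$
Second, writing $f=[f]_{\leq d}+r$ with $v_t(r)\geq d+1$, both $2[f]_{\leq d}\,r$ and $r^2$ have $v_t\geq d+1$, so since $\deg g_i\leq d-1$ one obtains
$$g_i\perp f=g_i\perp [f]_{\leq d},\qquad g_i\perp f^2=g_i\perp\bigl([f]_{\leq d}\bigr)^2.$$

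With these facts the two implications are direct. For $(I)\Rightarrow(II)$, given $(\lambda_j)$ satisfying (\ref{Lin}), set $h=\sum_j\lambda_jh_j\in W$ and pick $f\in B$ with $[f]_{\leq d}=h$; then (\ref{Lin}) reads $g_i\perp h=0$, hence $g_i\perp f=0$, so $(I)$ gives $g_i\perp f^2=0=g_i\perp h^2$, which is precisely the left-hand side of (\ref{Quad}). Conversely, given $f\in B$ with $g_i\perp f=0$ for all $i$, put $h=[f]_{\leq d}\in W$ and expand $h=\sum_j\lambda_jh_j$; these $\lambda_j$ satisfy (\ref{Lin}) since $g_i\perp h=g_i\perp f=0$, so $(II)$ yields $g_i\perp h^2=0$, whence $g_i\perp f^2=0$ by the second observation. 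The only delicate point is the degree bookkeeping in this second observation, which is what forces the hypothesis $\deg g_i\leq d-1$ and the choice of threshold $d\geq c-1$; everything else is a straightforward translation between elements of $B$ and their truncated coordinates in the basis $h_1,\dots,h_m$ of $W$.
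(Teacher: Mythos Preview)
Your argument is correct and follows the same route as the paper's proof: reduce the algebra-forming condition on $B$ to the analogous condition on $W=W(\{f_1,\dots,f_s\},d)$, then parametrize elements of $W$ by $(\lambda_1,\dots,\lambda_m)$ to obtain the linear and quadratic conditions. The paper asserts the reduction from $B$ to $W$ in one sentence (``Since the polynomials of $V$ are of degree at most $d-1$ we only have to prove \dots''), whereas you spell out the two ingredients behind it: that $W=\{[f]_{\le d}:f\in B\}$ after normalizing the generators into $(t)$, and that $g_i\perp f=g_i\perp[f]_{\le d}$ and $g_i\perp f^2=g_i\perp([f]_{\le d})^2$ because $\deg g_i\le d-1$; this extra care is welcome, but the overall strategy is identical.
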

\begin{proof}
From \propref{charV} we have to prove that for all $f\in B$ such that $g\perp f=0$ for all $g\in V$ we have that
$g\perp f^2=0$ for all $g\in V$.
Since the polynomials of $V$ are of degree at most $d-1$
we only have to prove that for all
$f\in W=W(\{f_1,\dots, f_s\},d)$ such that $g\perp f=0$ for all $g\in V$, we have that
$g\perp f^2=0$ for all $g\in V$.

A general element of $W$ can be written as $f=\sum_{j=1}^m \lambda_j h_j$.
Hence the condition $g_i\perp f=0$ is equivalent to
$$
\sum_{j=1}^m \lambda_j (g_i\perp h_j)=0
$$
for all $i=1,\dots,\delta$.
Similarly, the condition
$g_i\perp f^2=0$ is equivalent to
$$
\sum_{j=1}^m \lambda_j^2 (g_i\perp h_j^2)+
2 \sum_{j=1,l=1,j\neq l}^m \lambda_j \lambda_j (g_i\perp h_j h_l)=0
$$
for all $i=1,\dots,\delta$.
\end{proof}

\medskip
\begin{remark}
\label{def-alg-for}
The set of points $(\lambda_1,\dots,\lambda_m)\in \mathbb P^{m-1}_{\res}$ satisfying the identities of (\ref{Lin}) form a
linear subvariety $L$, and the points satisfying the identities of (\ref{Quad}) defines a subvariety  $Q\subset \mathbb P^{m-1}_{\res}$ intersection of $\delta $ quadrics.
Hence $V$ is algebra forming with respect to $B$ iff $L\subset Q$.
This is a computable condition.
\end{remark}

\medskip
\begin{definition}
\label{standard-fil}
Let $B$ be a sub-$\res$-algebra of finite codimension $\delta$ of $\Gamma$ and conductor $c$.
Let $D$ be the semigroup of $B$;
we write the set $t^{\mathbb N\setminus D_B}=\{t^i; i\in \mathbb N\setminus D_B \}$ as
$g_1=t^{c-1},\dots, g_{\delta}=t$.
Then we define the so-called standard filtration of $B$ as follows:
$B_i$ is the $\res$-algebra generated by B and $g_1,\dots, g_i$ for $i=1, \dots, \delta$; we set $B_0=B$.
Notice that $B_{\delta}=\Gamma$ and that we have
$$
B=B_0\subset B_1\subset \dots \subset B_{\delta}=\Gamma
$$
and $\dim_{\res}(B_{i+1}/B_i)=1$, $i=0,\dots, \delta-1$.
\end{definition}

\medskip
After the definition of standard filtration  we only have to consider algebra-forming elements $g\in \Delta$, with respect a suitable sub-$\res$-algebras of $\Gamma$,
in order to define a $\res$-algebra recursively.
The algebra-forming elements are not unique as the following example shows.

\medskip
\begin{example}
\label{toy-example-3}
Let us consider the \exref{toy-example}.
The standard filtration of $B$ is:
$$
B=\res[\![t^3+t^4,t^5]\!]\subset
B_1=\res[\![t^3+t^4,t^5,t^7]\!]\subset
B_2=\res[\![t^3,t^4,t^5]\!]\subset
B_3=\res[\![t^2,t^3]\!]\subset \Gamma.
$$
The chain of $\res$-algebras is defined as follows.
The cosets of $t,t^2, t^4, t^7$ in $\Gamma/B$ form a basis of $\Gamma/B$ as $\res$-vector space.
Then
$B_1$ is the $\res$-algebra generated by $B$ and $t^7$,
$B_2$ is the $\res$-algebra generated by $B_1$ and $t^4$,
$B_3$ is the $\res$-algebra generated by $B$ and $t^2$, and finally $\Gamma$
is the $\res$-algebra generated by $B$ and $t$.

We know that $B^{\perp}$ is a $4$ dimensional $\res$-vector space generated by
$u, u^2, u^3-\frac{1}{4}u^4, u^6-\frac{1}{2.7}u^7$;
we have
$B_3=\ann\langle u\rangle$,
$B_2=\ann\langle u^2\rangle\cap B_3$,
$B_1=\ann\langle u^3-\frac{1}{4}u^4\rangle\cap B_2$,
$B=\ann\langle u^6-\frac{1}{2.7}u^7\rangle\cap B_1$.
On the other hand, the $\res$-algebra
$C_1=\res[\![t^3+t^5,t^4]\!]\subset B_1$ can be obtained as
$$
C_1=\ann\langle u^3-\frac{1}{4.5}u^5\rangle\cap B_2,
$$
i.e.
$u^3-\frac{1}{4.5}u^5$ is an algebra-forming element with
respect to $B_2$.
Notice that $B_1$ and $C_1$ are non analytically
isomorphic codimension one $\res$-algebras of $B_2$ .
\end{example}

\medskip
Next we show how to build the standard filtration by using derivations.

\medskip
\begin{proposition}
Let $C\subset B$ be two sub-$\res$-algebras of $\Gamma$ such that
$\dim_{\res}(B/C)=1$.
There exist $\alpha\in Der_{\res}(B)$ such that
$\ker(\alpha)=C$.
\end{proposition}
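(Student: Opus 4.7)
The plan is to define $\alpha$ as a $\res$-linear functional on $B$ with kernel $C$ and then verify the Leibniz rule. Since $\dim_{\res}(B/C)=1$, write $B=C\oplus \res\cdot b_0$ for some $b_0\in B\setminus C$; subtracting $b_0(0)\in \res\subset C$, I may assume $b_0\in \max_B$. Set $\alpha(c+\lambda b_0)=\lambda$, which is $\res$-linear with $\ker(\alpha)=C$ and $\alpha(1)=0$. Splitting $f=f(0)+\widetilde f$, $g=g(0)+\widetilde g$ with $\widetilde f,\widetilde g\in \max_B$, the Leibniz identity $\alpha(fg)=f(0)\alpha(g)+g(0)\alpha(f)$ reduces to the single requirement $\alpha(\widetilde f\widetilde g)=0$ for all $\widetilde f,\widetilde g\in \max_B$; that is, $\max_B^{2}\subset C$.

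To prove $\max_B^{2}\subset C$, note that $\max_C=C\cap \max_B$ has codimension one in $\max_B$, so $\max_B=\max_C+\res\cdot b_0$ and
$$\max_B^{2}=\max_C^{2}+\max_C\cdot b_0+\res\cdot b_0^{2}.$$
The first summand lies in $C$ trivially; for the second, observe that $B/C$ is a one-dimensional $C$-module, so the $C$-action factors through a $\res$-algebra character $C\to\res$, which is forced to be evaluation at the origin, yielding $\max_C\cdot b_0\subset C$. Thus everything reduces to checking $b_0^{2}\in C$.

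For this I must pick $b_0$ cleverly. Since $C$ has finite codimension in $\Gamma$, some $(t^{N})\subset C$, so every $b\in B\setminus C$ satisfies $v_t(b)<N$; let $n$ be the largest value of $v_t(b)$ attained on $B\setminus C$, and choose $b_0$ with $v_t(b_0)=n$. Then $n\notin D_C$: the existence of $c_0\in C$ with $v_t(c_0)=n$ would let me cancel the leading term of $b_0$, producing a representative of strictly larger order and contradicting maximality. Now write $b_0^{2}=c+\mu b_0$ with $c\in C$, $\mu\in \res$. If $\mu\neq 0$, then $c=b_0^{2}-\mu b_0$ has $v_t(c)=n$, forcing $n\in D_C$, a contradiction. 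Hence $\mu=0$ and $b_0^{2}\in C$, completing the inclusion $\max_B^{2}\subset C$ and showing that $\alpha$ is a derivation.

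The only delicate step is the maximal-order choice of $b_0$; without it there is no obvious reason why $b_0^{2}$ should avoid escaping $C$. The remaining ingredients — the Leibniz bookkeeping and the one-dimensionality of $B/C$ as a $C$-module — are formal.
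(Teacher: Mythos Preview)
Your proof is correct and follows the same strategy as the paper: both hinge on the inclusion $\max_B^{2}\subset\max_C$, after which the desired derivation is just a linear form on $\max_B/\max_B^{2}$ with kernel $\max_C/\max_B^{2}$. The paper simply \emph{asserts} $\max_B^{2}\subset\max_C$ without justification, whereas you supply one; your module argument for $\max_C\cdot b_0\subset C$ and your maximal-order choice of $b_0$ to force $b_0^{2}\in C$ are a clean way to fill that gap (under the ambient finite-codimension hypothesis of the paper).
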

\begin{proof}
If we denote by $\max_B$ the maximal ideal of $B$ then
$\max_C\subset \max_B$,  $\dim_{\res}(\max_B/\max_C)=1$ and  $\max_B^2\subset \max_C$.
Since we have
$$
\frac{\max_C}{\max_B^2} \subset \frac{\max_B}{\max_B^2}
$$
we deduce that there exists a linear form $\alpha: \frac{\max_B}{\max_B^2}\longrightarrow \res$
such that
$\ker(\alpha)=\frac{\max_C}{\max_B^2}$.
From this we get the claim.
\end{proof}

\medskip
\begin{corollary}
\label{derivations}
Let $B$ be a sub-$\res$-algebra of finite codimension $\delta$ of $\Gamma$.
Let us consider the standard filtration of $B$:
$$
B=B_0\subset B_1\subset \dots \subset B_{\delta}=\Gamma.
$$
For all $i=1,\dots,\delta$ there exists a derivation
$\partial_{l_i}\in Der_{\res}(B_i)$, $l_i\in\max_{B_i}$, such that
$\ker(\partial_{l_i})=B_i$.
\end{corollary}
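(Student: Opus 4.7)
The plan is to apply the previous proposition iteratively to each consecutive inclusion $B_{i-1}\subset B_i$ of the standard filtration, for $i=1,\dots,\delta$. Since \defref{standard-fil} guarantees $\dim_{\res}(B_i/B_{i-1})=1$, the hypothesis of the preceding proposition is satisfied at every step with $C=B_{i-1}$ and $B=B_i$. This immediately produces, for each $i$, a $\res$-derivation $\alpha_i\in Der_{\res}(B_i)$ whose kernel is exactly $B_{i-1}$ (which is what the final equality in the statement is intended to express).

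To realize such an $\alpha_i$ as a $\partial_{l_i}$ indexed by some $l_i\in\max_{B_i}$, I would use the canonical identification $Der_{\res}(B_i)\cong(\max_{B_i}/\max_{B_i}^2)^{*}$ already invoked in the preceding proof. Since $\max_{B_{i-1}}\subset\max_{B_i}$ has codimension one and $\max_{B_i}^2\subset\max_{B_{i-1}}$, any $l_i\in\max_{B_i}\setminus\max_{B_{i-1}}$ has class $[l_i]$ extending a basis of $\max_{B_{i-1}}/\max_{B_i}^2$ to one of $\max_{B_i}/\max_{B_i}^2$. The dual form sending $[l_i]\mapsto 1$ and the remaining basis vectors to $0$ then defines a derivation $\partial_{l_i}\in Der_{\res}(B_i)$ whose kernel coincides with $B_{i-1}$; this is the $\alpha_i$ of the previous step.

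The resulting sequence $\partial_{l_1},\dots,\partial_{l_\delta}$ recovers the standard filtration recursively, since at each stage $B_{i-1}=\ker(\partial_{l_i})\subset B_i$. There is essentially no obstacle beyond bookkeeping: the corollary is a straightforward iteration of the previous proposition along the standard filtration, and the only subtlety is matching the abstract linear form on $\max_{B_i}/\max_{B_i}^2$ with the differential operator labelled by a particular cotangent direction $l_i\in\max_{B_i}$, which is handled by the standard cotangent--derivation duality already used above.
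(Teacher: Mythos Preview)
Your proposal is correct and follows exactly the route the paper intends: the corollary is stated without proof in the paper, being an immediate iteration of the preceding proposition along the standard filtration, and you have filled in precisely those details (including correctly reading the conclusion as $\ker(\partial_{l_i})=B_{i-1}$, which is what the example following the corollary confirms, despite the apparent typo $\ker(\partial_{l_i})=B_i$ in the statement). Your additional paragraph explaining how the abstract linear form on $\max_{B_i}/\max_{B_i}^2$ is realized as $\partial_{l_i}$ for a concrete $l_i\in\max_{B_i}\setminus\max_{B_{i-1}}$ makes explicit the identification $Der_{\res}(B_i)\cong(\max_{B_i}/\max_{B_i}^2)^*$ that the paper invokes earlier; this is more detail than the paper provides, but it is the intended mechanism.
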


\medskip
\begin{example}
Let us consider the \exref{toy-example-3}.
The element $u^{\perp}$ corresponds to the derivation $\partial_t$ of $\Gamma$ defined by $t$, so
$B_3=\ker(\partial_t)$.
The maximal ideal of $B_3$ is minimally generated by $t^2, t^3$, the element $(u^2)^\perp$ is the derivation $\partial_{t^2}\in Der_{\res}(B_3)$,
so $B_2=\ker(\partial_{t^2})$.
The maximal ideal of $B_2$ is minimally generated by $t^3, t^4, t^5$.
The element $(u^3-\frac{1}{4}u^4)^{\perp}$ is the derivation
$\partial_{t^3-\frac{1}{4}t^4}\in Der_{\res}(B_2)$, so
$B_1=\ker(\partial_{t^3-\frac{1}{4}t^4})$.
Finally, $\partial_{t^7}\in Der_{\res}(B_1)$ and $B=\ker(\partial_{t^7})$.
\end{example}

\medskip
\section{Monomial algebras}

In this section we first compute  the inverse system of a monomial $\res$-algebra.
After this, we characterize monomial Gorenstein curve singularities in terms of its
inverse system.
We end the section relating the inverse system of a curve singularity with its generic plane
projection and its saturation.

The following result  it is easy to deduce from the proof of the second part of \propref{charV}(2).

\medskip
\begin{proposition}
\label{monomial}
Let $D$ be an additive  sub-semigroup of $\mathbb N$ with finite complement.
Then $B^{\perp}$ is the $\res$-vector space generated by:
$g_i=u^i$ for $i\in \mathbb N\setminus D$.
\end{proposition}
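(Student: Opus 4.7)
The plan is to verify separately that each $u^i$ with $i\in \mathbb N \setminus D$ lies in $B^{\perp}$ and then to show that these elements already span the whole inverse system by a dimension count. Here $B$ denotes the monomial sub-$\res$-algebra $\res[\![t^d : d\in D]\!]\subset \Gamma$ associated to the semigroup $D$; its codimension in $\Gamma$ equals $\delta := \#(\mathbb N \setminus D)$, which is finite by hypothesis.

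First I would record the support condition: every $f\in B$ admits a power series expansion $f=\sum_{d\in D} a_d t^d$ whose support in $t$ is contained in $D$. Indeed, since $D$ is additively closed, any product of generators $t^{d_1}\cdots t^{d_k}$ equals $t^{d_1+\cdots +d_k}$ with exponent in $D$, and arbitrary elements of $B$ are $\m$-adic limits of $\res$-linear combinations of such products, so the support condition is preserved. Given any $i\in \mathbb N \setminus D$ and any $f\in B$, the differentiation action then yields
\[
u^i\perp f \;=\; (u^i\circ f)(0) \;=\; i!\,a_i \;=\; 0,
\]
because $a_i=0$ by the support condition. Hence $u^i\in B^{\perp}$ for every $i$ in the complement of $D$; this is precisely the argument already used in the proof of \propref{charV}(2) for the range $i\in [1,e_0(B)-1]$, now extended to the whole gap set of $D$.

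For the spanning step, the monomials $\{u^i : i\in \mathbb N \setminus D\}$ are distinct and hence linearly independent in $\Delta=\res[u]$, and their cardinality is exactly $\delta$. By \propref{charV}(1) we have $\dim_{\res} B^{\perp}=\delta$, so an already independent family of size $\delta$ sitting inside a $\delta$-dimensional space must be a basis. This gives the claim.

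The main (and essentially only) observation is the support-in-$D$ condition for monomial algebras; once this is recorded, the rest is a dimension count and no algebra-forming or quadratic test needs to be invoked, because here $B$ is given in advance and we are merely reading off $B^{\perp}$ from its explicit monomial shape rather than constructing $B$ from a candidate subspace of $\Delta$.
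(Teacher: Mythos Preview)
Your proof is correct and follows essentially the same approach the paper indicates: the paper gives no formal proof but states that the result ``is easy to deduce from the proof of the second part of \propref{charV}(2),'' which is exactly the support-in-$D$ observation you spell out, combined with the dimension equality $\dim_{\res}B^{\perp}=\delta$ from \propref{charV}(1). One could equally obtain the reverse inclusion directly by pairing a putative $g\in B^{\perp}$ against $t^d$ for each $d\in D$ to kill the coefficient $a_d$, but your dimension count is just as clean.
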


\medskip
\begin{example}
\label{delta1}
Let $B$ be a sub-$\res$-algebra of $\res[\![t]\!]$ of codimension $\delta =1$.
Then $B$ is the $\res$-algebra $B=\res[\![D]\!]$ where $D$ is the sub-semigroup of $\mathbb N$ generated by $2, 3$.
Hence $B^{\perp}$ is the $\res$-vector space generated by $u$, i.e. $B$ is the set of power series  $f=\sum_{i\ge 0} b_it^i \in \res[\![t]\!]$
with  $u\perp f= b_1=0$.
See  \cite{Ser59} Example b), Section 4 of Chapter IV, and
\cite[Section 22]{GLTU22}.
\end{example}

\begin{example}
\label{delta2}
Assume now that $B$ is sub-$\res$-algebra of $\res[\![t]\!]$ of codimension
$\delta= 2$.
Then its semi-group $D_B$ is $D_1=\langle 2, 5\rangle$ or  $D_2=\langle 3, 4\rangle$.
In the first case $B$ is generated as $\res$-algebra by $f_1=t^2+b_3 t^3$ and $f_2=t^5$.
 The conductor is $c=4$.
Then $B^{\perp}$ is generated by $g_1=u$,
$g_2=6 b_3 u^2 +u^3$.
In the second case $B$ is the monomial $\res$-algebra $B=\res[\![D_2]\!]$ so $B^{\perp}$ is the sub-$\res$-algebra generated by
$g_1=u$ and  $g_2=u^2$.
 The conductor is $c=5$.
See \cite[Section 23]{GLTU22}.
It is known that the algebras of the first case are all analytically isomorphic to
$\res[\![D_1]\!]$.
\end{example}

\medskip
The inverse system of a monomial Gorenstein $\res$-algebra case can be handled.
Let us recall the definition of symmetric semi-group and the celebrate result of Kunz.

\medskip
\begin{definition}
We say that a sub-semigroup $D$ of $\mathbb N$ such that $\#(\mathbb N \setminus D)<\infty$ and with conductor $c$ is symmetric if the condition  $t\in D$ is equivalent to $c-1-t\notin D$.
\end{definition}

\medskip
Kunz proved that the ring $\res[\![D]\!]$ is Gorenstein ring if and only if $D$ is a symmetric semigroup,\cite{Kun70}.
This symmetry is inherited by $B^{\perp}$.

\medskip
\begin{proposition}
\label{Gor}
Let $D$ be a sub-semigroup  of $\mathbb N$
such that $\#(\mathbb N \setminus D)<\infty$ and conductor $c$.
The following conditions are equivalent:
\begin{enumerate}
    \item $\res[\![D]\!]$ is Gorenstein,
    \item for all $g\in \res[\![D]\!]^{\perp}$ it holds $t^{c-1} g(1/t)\in \res[\![D]\!]$.
\end{enumerate}
\end{proposition}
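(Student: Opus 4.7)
The plan is to translate condition~(2) into a purely combinatorial statement about the gap set $N := \mathbb{N}\setminus D$, then bridge to (1) via Kunz's theorem and the cardinality bound from Proposition~\ref{basic}(iii). By Proposition~\ref{monomial}, $\res[\![D]\!]^{\perp}$ has $\res$-basis $\{u^i : i \in N\}$, and by Proposition~\ref{charV}(2) we have $N \subset [1, c-1]$; moreover $c-1 \in N$ because if $t^{c-1} \in B$ then $t^{c-1}\Gamma \subset B$, contradicting minimality of the conductor. For $g = u^i$ one computes $t^{c-1} g(1/t) = t^{c-1-i}$, which lies in $\res[\![D]\!]$ precisely when $c-1-i \in D$. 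By $\res$-linearity and the fact that $B^{\perp}$ is spanned by such monomials, condition~(2) is therefore equivalent to the purely combinatorial condition that the involution $\sigma(j) := c-1-j$ on $[0,c-1]$ satisfies $\sigma(N) \subset D \cap [0,c-1]$.

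For the implication $(1)\Rightarrow(2)$, I would invoke Kunz's theorem (already cited in the excerpt): $\res[\![D]\!]$ Gorenstein means $D$ symmetric, i.e.\ $t \in D \Leftrightarrow c-1-t \notin D$. In particular, for $i \in N$ one has $c-1-i \in D$, which is exactly (2) in the reformulated form.

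For $(2)\Rightarrow(1)$, the key observation is that \emph{every} numerical semigroup with conductor $c$ enjoys a ``half-symmetry'': if $j \in D\cap[0,c-1]$ and $\sigma(j) = c-1-j$ were also in $D$, then their sum $c-1$ would lie in $D$, contradicting $c-1 \in N$. Hence $\sigma(D \cap [0,c-1]) \subset N$ always, giving $|D\cap[0,c-1]| \leq |N| = \delta$, i.e.\ $c \leq 2\delta$. Condition~(2) gives the reverse inclusion $\sigma(N) \subset D \cap [0, c-1]$ and hence $\delta \leq c - \delta$. Combining forces $c = 2\delta$, so Gorenstein follows from Proposition~\ref{basic}(iii).

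The main subtlety is that (2) by itself only encodes half of Kunz's symmetry condition ($\sigma(N) \subset D$); the other half comes for free from the universal inequality $c \leq 2\delta$, which is precisely the content of Proposition~\ref{basic}(iii). The argument thus hinges on recognizing that condition~(2) supplies exactly the missing inequality $c \geq 2\delta$ needed to force equality, and that equality is the Gorenstein criterion in the branch case.
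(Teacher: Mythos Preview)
Your proof is correct and follows the same line as the paper: use Proposition~\ref{monomial} to translate condition~(2) into the combinatorial statement that $i\notin D$ implies $c-1-i\in D$, then connect to Gorensteinness via the symmetry of $D$ and Kunz's theorem. The paper's proof is terser and simply asserts that (2) is equivalent to symmetry; you are right to observe that (2) directly yields only the implication $i\notin D\Rightarrow c-1-i\in D$, while the converse $i\in D\cap[0,c-1]\Rightarrow c-1-i\notin D$ holds for every numerical semigroup (otherwise $c-1\in D$).

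One small remark: your detour through the equality $c=2\delta$ and Proposition~\ref{basic}(iii) is correct but unnecessary. Once you have both inclusions $\sigma(N)\subset D\cap[0,c-1]$ and $\sigma(D\cap[0,c-1])\subset N$ with $\sigma$ an involution on $[0,c-1]$, you immediately get $\sigma(N)=D\cap[0,c-1]$, which is precisely the symmetry of $D$, so Kunz applies directly without passing through the numerical criterion $c=2\delta$.
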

\begin{proof}
Since $B=\res[\![D]\!]$ is a monomial $\res$-algebra we know that $B^{\perp}$ is generated by
$g=\sum_{i=1}^{c-1}a_i u^i$ such that $a_i=0$ for $i\in D$, \propref{monomial}.
Then the exponents of the non zero terms of $t^{c-1} g(1/t)$ are $c-1-i$ with $i\notin D$.
Then the claim is equivalent to the  symmetry of $D$, i .e. the Gorensteinness of $B$.
\end{proof}

\medskip
\begin{example}
Let $D$ be the semigroup generated by  $4, 6 ,$ and $ 9$.
This is a symmetric semigroup with conductor $c=12$.
The algebra $B=\res[\![D]\!]$ is Gorenstein and  isomorphic to $\res[\![x,y,z]\!]/I$ where
$I=(x^3-y^2, y^3-z^2)$.
Then $B^{\perp}$ is generated by the polynomials
$g=a_1 u+ a_2 u^2+ a_3 u^3 + a_5 u^5 + a_7 u^7$, $a_i\in\res$.
The polynomials $t^{11}g(1/t)= a_1 t^{10}+ a_2 t^9 + a_3 u^8 + a_4 u^6+a_5 u^4$ have all exponents in $D$.
The $\res$-vector space $B^{\perp}$
is generated by the following elements
$g_1=u, g_2=u^2, g_3=u^3, g_4=u^5, g_5=u^7$.
\end{example}

\medskip
Given a finite codimension subalgebra $B$ of $\Gamma$ we consider the curve singularity
$X=\spec(B)$ defined by $B$.
Let $X'$ be the generic plane projection of $X$, \cite{BGG80}, and let $\widetilde{X}$ be
the saturation of $X$, \cite{Zar-Sat-III} and the references therein.
We have
$$
\OXP\subset \OX0=B\subset \OXS\subset \Gamma
$$
and then
$$
\OXS^{\perp}\subset B^{\perp} \subset \OXP^{\perp}.
$$
We have, \cite{Eli88},
$$
\delta(\widetilde{X}) \le \delta(X) \le \delta(X')\le (e_0(X)-1) \delta(\widetilde{X})- \binom{e_0(X)-1}{2}
$$
From \cite[Proposition  1.6, pag. 971]{Zar65b} we know that $\widetilde{X}$ is also the saturation of $X'$.

On the other hand $\widetilde{X}$ is a monomial curve singularity.
Assume that the coset of $x_1$ in $B$ is $t^{e_0}$ with $e_0$ the multiplicity of $B$.
Since the rings are complete and the ground field is algebraically closed, we can  assumed it after a suitable election of the uniformization parameter of $\Gamma$.
Let $\{e_0; \beta_1,\dots, \beta_g\}$ be the characteristic of $X'$, \cite[Section 3, pag. 993]{Zar-Sat-III}, then
$\OXS$ is the monomial subalgebra with generators:
$$
\left\{
    \begin{array}{ll}
      t^{e_0}, & \hbox{} \\
      t^{s_\nu n_{\nu+1}\cdots n_g}, & \hbox{} m_{\nu}\le s_{\nu}\le [m_{\nu+1}/n_{\nu+1}], \nu=1,\dots, g-1 \\
      t^{m_g+i}, & \hbox{} 0\le i\le e_0-1
    \end{array}
  \right.
$$
where $\beta_{\nu}/e_0=m_{\nu}/n_1\dots n_{\nu}$ is the $\nu$-th characteristic exponent of $X'$, $\nu=1,\dots, g-1$, and $\gcd(m_i, n_i)=1$
for all $i=1,\dots,g$, \cite[Section 3, pag. 995]{Zar-Sat-III}.

The facts
$\OXS^{\perp}\subset B^{\perp}$ and \propref{delta1} can be useful in order to simplify the computation of $B^{\perp}$
as the next example shows.

\begin{example}
Let us consider the $\res$-algebra $B=\res[\![t^6, t^8+t^{11}, t^{10}+t^{13} ]\!]$; its saturation is
$\widetilde{B}=\res[\![t^6, t^8, t^{10}, t^{11}, t^{13}, t^{15}]\!]$, \cite[Example 2.5.1]{CC05}.
The sequence of multiplicities of the resolution of $X=\spec(B)$ is
$\{6,2,2,2,2,1,\dots\}$.
We can compute $\delta(X)$ by computing $e_1(C)$ where $C$ ranges the local rings of the resolution process, in this case we get $\{8,1,1,1,1,0,\dots\}$, so $\delta(X)=12$.
The semigroup of $B$ is
$D=\{0,6,8,10,12,14,16,18,19,20,22\rightarrow\}$, i.e. the conductor of $D$ is $22$.

On the other hand the semigroup of $\OXS$ is $\{0,6,8,10\longrightarrow\}$, its conductor is $10$.
Hence $\OXS^{\perp}$ is generated by $u^i$ with $i\in\{1,2,3,4,5,7,9\}$, and
$B^{\perp}$ is the set of polynomials
$g=\sum_{i=0}^{21} a_i u^i$ such that
$a_6=0$, $990 a_{11}- a_8=0$, $a_{12}=0$, $1716 a_{13}-a_{10}=0$, $a_{16}=0$, $4080 a_{17}-a_{14}=0$,
$a_{18}=a_{19}=a_{20}=a_{21}=0$.

\end{example}

\medskip
\section{The canonical module}

As in the Artin case we can relate the canonical module with the inverse system. In that case we have that if $I$ is an Artinian ideal then $I^{\perp}\cong E_{R/I}(\res)\cong \omega_{R/I}$, \cite{BH97}, \cite{Eli18}.
In the case of branches we can determine the "negative" part of the canonical module.

Let $X$ be a branch of $(\res^n,0)$ and $\overline{X}$ its normalization.
We  first describe the canonical module $\omega_X$ by using Rosenlicht's  regular differential forms,
\cite[IV 9]{Ser59},  \cite[Section 1]{BG80}, see also \cite{Eli16}.
We denote by $\Omega_{\overline{X}}(p)$ the set of meromorphic forms in $\overline{X}$
with a pole at most in  $p=\nu^{-1}(0)$.
Then Rosenlicht's differential forms are defined as follows:
$\omega^R_{X}$ is the set of $\nu_*(\alpha)$,  $\alpha \in \Omega_{\overline{X}}(p)$, such that
for all $F\in \OX0$
$$
{\rm res}_{p}(F \alpha)=0.
$$
Notice that we have a mapping that we also denote by
$$
d_R : \OX0 \longrightarrow
\Omega_{X} \longrightarrow
\nu_* \Omega_{\overline{X}} \hookrightarrow
 \omega^R_{X}.
$$
In  \cite[Chap. VIII]{AK70} it  is proved that $\omega_{X} \stackrel{\phi}{\cong} \omega_{X}^R$ and
$d_R=\phi d$, where $d: \OX0 \longrightarrow  \omega_{X}$ is the map defined in the  Section 1.
Since $\OX0$ is a one-dimensional reduced ring we know that $\omega_{(X,0)}$
is a sub-$\OX0$-module of $\mathrm{tot}(\OX0)$, \cite[3.3.18]{BH97}.
There is a the perfect pairing, \cite[Chapter IV]{Ser59},
$$
\begin{array}{ccccc}
  \frac{\nu_* {\mathcal O}_{\overline{X}}}{\OX0} & \times& \frac{\omega_{(X,0)}}{\nu_* \Omega_{\overline{X}}} & \stackrel{\eta}{\longrightarrow}  & \mathbb C \\
  F & \times  & \alpha & \longrightarrow  & {\mathrm{res}}_{p}(F \alpha)
\end{array}
$$
notice that for all $\lambda\in R$ it holds
$
\eta(\lambda F, \alpha)= {\mathrm{res}}_{p}(\lambda F \alpha)=\eta( F, \lambda\alpha).
$

\medskip
\begin{proposition}
\label{canon}
Let $X$ be a branch of $(\res
^n,0)$ and $\overline{X}$ its normalization.
Then  we have an isomorphism of the $\delta(X)$ dimensional $\res$-vector spaces:
$$
B^{\perp}\overset{\epsilon}{\cong} \frac{\omega_{X}}{\nu_* \Omega_{\overline{X}}}
$$
such that $\epsilon(g)$ is the coset defined by $\alpha = \sum_{i=0}^{c-1} i! c_{i} t^{-i-1}$, for all $g=\sum_{i=0}^{c-1} c_i u^i\in B^{\perp}$.
\end{proposition}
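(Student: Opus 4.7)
The plan is to realize $\epsilon$ as the map induced by comparing the Macaulay-type perfect pairing $\perp$ of (\ref{pairing}) with Rosenlicht's residue pairing $\eta$, under the explicit correspondence $g = \sum_{i=0}^{c-1} c_i u^i \longleftrightarrow \alpha = \bigl(\sum_{i=0}^{c-1} i!\,c_i\,t^{-i-1}\bigr)\,dt$. Both pairings are non-degenerate and land in $\res$; once the two are shown to agree termwise via this correspondence, the isomorphism $\epsilon$ emerges as the identification of one dual description of $\Gamma/B$ with the other.

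First I would verify the matching of pairings on monomials. For $f = \sum_j b_j t^j \in \Gamma$ and the monomial $g = u^i$, one has $u^i \circ f = \partial_t^i f$, so $u^i \perp f = i!\,b_i$. On the differential side, with $\alpha_i = i!\,t^{-i-1}\,dt$, the coefficient of $t^{-1}\,dt$ in the product $f\cdot \alpha_i$ is again $i!\,b_i$, hence $\mathrm{res}_p(f\alpha_i) = i!\,b_i$. Extending by $\res$-linearity yields
\[
g \perp f \;=\; \mathrm{res}_p(f\alpha) \quad \text{for all } g = \sum c_i u^i \in \Delta \text{ and } f \in \Gamma.
\]

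With this identity in hand, $g \in B^{\perp}$ is equivalent to $\mathrm{res}_p(F\alpha) = 0$ for every $F \in \OX0 = B$, which is precisely the Rosenlicht condition characterizing $\omega_X^R \cong \omega_X$. Hence $\alpha$ defines a well-posed class $\epsilon(g) \in \omega_X/\nu_*\Omega_{\overline{X}}$, and the assignment $g \mapsto \epsilon(g)$ is $\res$-linear. Injectivity is immediate: if $\epsilon(g) = 0$, then $\alpha$ is holomorphic at $p = \nu^{-1}(0)$, but $\alpha$ is purely polar, so all $c_i$ vanish and $g = 0$. Surjectivity then follows from the dimension count $\dim_{\res} B^{\perp} = \delta(X) = \dim_{\res} \omega_X/\nu_*\Omega_{\overline{X}}$, where the first equality is \propref{charV}(1) and the second comes from the perfect pairing $\eta$ with $\nu_*\OXS/\OX0 = \Gamma/B$.

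There is no essential obstacle: the only delicate point is bookkeeping the combinatorial constants so that the two pairings match on the nose. The factor $i!$ built into the definition of $\alpha$ is precisely what converts the dual basis $u^i$ of $B^{\perp}$ into the correctly normalized polar forms $t^{-i-1}\,dt$; without it one would obtain a bijection of sets but not a natural $\res$-linear isomorphism compatible with the two pairings.
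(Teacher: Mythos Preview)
Your proof is correct and follows essentially the same approach as the paper: both arguments identify $\epsilon$ by matching the Macaulay pairing $\perp$ with the residue pairing $\eta$ through the common dual $(\Gamma/B)^*$, and both pin down the factor $i!$ by comparing $u^i\perp f$ with $\mathrm{res}_p(f\,t^{-i-1}dt)$. The only cosmetic difference is that the paper factors $\epsilon$ explicitly as a composition $B^{\perp}\overset{\epsilon_1}{\to}(\Gamma/B)^*\overset{\epsilon_2}{\to}\omega_X/\nu_*\Omega_{\overline X}$ and reads off the coefficients from each leg, whereas you verify the termwise identity $g\perp f=\mathrm{res}_p(f\alpha)$ directly and then conclude by an injectivity-plus-dimension argument; the content is the same.
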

\begin{proof}
We write $B=\OX0$,  $\Gamma =\nu_* {\mathcal O}_{\overline{X}}$, and
$\Omega_{\overline{X}}= \Gamma dt$.
Then $\epsilon$ is the composition of the isomorphisms induced by the above two perfect pairings
$$
B^{\perp}
\overset{\epsilon_1}{\cong} \left(\frac{\Gamma}{B}\right)^*
\overset{\epsilon_2}{\cong} \frac{\omega_{X}}{\nu_* \Omega_{\overline{X}}}
$$
Next we describe both morphims $\epsilon_1, \epsilon_2$.
Given $g\in B^{\perp}$ we can write it as
$$
g= c_0+ c_1 u+\dots, c_{c-1} u^{c-1},
$$
so $\epsilon_1(g)$ is the linear form induced by
$\xi:\Gamma^*\longrightarrow \res$ defined by:
if  $f=\sum_{i\ge 0} a_i t^i\in \Gamma$ then
$$
\xi(f)=\sum_{i=0}^{c-1} i! a_i c_i.
$$

On the other hand, every $\alpha \in \omega_X$ can be written as $\alpha =t^n h(t) dt$ with $n\in \mathbb Z$
and $h(t)\in \Gamma$ an invertible series.
From \cite[Proposition 2.6]{Eli16} we get that $\alpha = \sum_{i\ge-c} e_i t^i$
such that ${\rm{res}}_0(\alpha F)=0$ for all $f\in B$.
Given $f=\sum_{i\ge 0} a_i t^i\in \Gamma$ we have
$$
{\rm{res}}_0(f \alpha)= \sum_{i=0}^{c-1} a_i e_{-i-1}
$$
so $\epsilon_2^{-1}(\alpha)$ is the linear form iduced by $\xi':\Gamma^*\longrightarrow \res$ defined by:
$$
\xi'(f)=\sum_{i=0}^{c-1} a_i e_{-i-1}.
$$
From this we deduce that $e_{-i-1}=i! c_i$ for $i=0,\dots, c-1$.
\end{proof}

\medskip
\begin{example}[Example 2.7, \cite{Eli16}]
\label{exp-example}
Let us consider the monomial curve $X$ with parametrization $x_1=t^4, x_2=t^7, x_3=t^9$.
We have $c=11$, $\delta= 6$.
Then $\omega_X$ is the $\res$-vector space   spanned  by
$t^{-11}, t^{-7}, t^{-6}, t^{-4}, t^{-3}, t^{-2}, t^n, n\ge 0$, and the quotient
$\omega_X/ \nu_* \Omega_{\overline{X}}$ admits as $\res$-vector space base the cosets of
$t^{-11}, t^{-7}, t^{-6}, t^{-4}, t^{-3},$ $  t^{-2}$, and $\OX0^{\perp}$ is the $\res$-vector space with basis
$u, u^2, u^3, u^5, u^6, u^{10}$.
\end{example}


\medskip
\baselineskip=10pt

\providecommand{\bysame}{\leavevmode\hbox to3em{\hrulefill}\thinspace}
\providecommand{\MR}{\relax\ifhmode\unskip\space\fi MR }
\providecommand{\MRhref}[2]{%
  \href{http://www.ams.org/mathscinet-getitem?mr=#1}{#2}
}
\providecommand{\href}[2]{#2}

\end{document}